\documentclass[11pt,a4paper]{article} 

\usepackage{amsfonts, amsmath, wasysym}
\usepackage{stmaryrd} 

\usepackage{tikz}
\usetikzlibrary{shapes, positioning, patterns, arrows.meta}
\usepackage{caption}

\usepackage{amssymb,amsthm,
paralist
}

\usepackage{
latexsym,
}

\usepackage{url}

\definecolor{darkgreen}{rgb}{0,0.5,0}
\definecolor{darkred}{rgb}{0.7,0,0}
\usepackage[colorlinks, 
citecolor=darkgreen, linkcolor=darkred
]{hyperref}

\usepackage[a4paper, margin=1in]{geometry}

\theoremstyle{plain}

\numberwithin{equation}{section}

\newcommand{\h}{\ensuremath{{\mathcal H}}}
\newcommand{\s}{\ensuremath{{\mathcal S}}}
\newcommand{\w}{\ensuremath{{\mathcal W}}}

\newcommand{\ca}{\ensuremath{{\mathcal A}}}

\newcommand{\cc}{\ensuremath{{\mathcal C}}}

\newcommand{\ci}{\ensuremath{{\mathcal I}}}

\newcommand{\Hv}{{\bf H}}

\newcommand{\Ga}{\Gamma}

\newcommand{\Om}{\Omega}
\newcommand{\ka}{\kappa}

\newcommand{\si}{\sigma}

\newcommand{\Si}{\Sigma}
\renewcommand{\th}{\theta}
\newcommand{\Th}{\Theta}

\newcommand{\ep}{\varepsilon}

\newcommand{\R}{\ensuremath{{\mathbb R}}}
\newcommand{\N}{\ensuremath{{\mathbb N}}}

\newcommand{\downto}{\downarrow}

\newcommand{\embed}{\hookrightarrow}

\newcommand{\lap}{\Delta}

\newcommand{\grad}{\nabla}

\newcommand{\intersect}{\cap}

\DeclareMathOperator{\Interior}{Interior}

\DeclareMathOperator{\area}{Area}
\DeclareMathOperator{\diam}{diam}
\DeclareMathOperator{\Length}{Length}

\newcommand{\beq}{\begin{equation}}
\newcommand{\beql}[1]{\begin{equation}\label{#1}}
\newcommand{\eeq}{\end{equation}}
\newcommand{\beqa}{\begin{equation}\begin{aligned}}
\newcommand{\eeqa}{\end{aligned}\end{equation}}
\newcommand{\brmk}{\begin{rmk}}
\newcommand{\ermk}{\end{rmk}}
\newcommand{\partref}[1]{\hbox{(\csname @roman\endcsname{\ref{#1}})}}
\newcommand{\half}{\frac{1}{2}}

\usepackage{soul}

 \newtheorem{thm}{Theorem}[section]
\newtheorem{cor}[thm]{Corollary}
\newtheorem{lem}[thm]{Lemma}

\newtheorem{defn}[thm]{Definition}

\theoremstyle{remark}
\newtheorem{rmk}[thm]{Remark}

\usepackage{varwidth}

\title{\Large \sc sharp diameter estimates via integral geometry\footnote{MSC 2020: 53A05, 53C65, 49Q05, 53C42, 49Q10.}}
\author{Peter M. Topping}
\date{25 October 2026}

\begin{document}

%

\parskip 8pt
\parindent 0pt

\maketitle

\begin{abstract}
We prove that the diameter of a closed embedded surface in $\R^3$ is 
always less than the integral of the magnitude of the mean curvature, divided by $\pi$.
We also prove a sharp variant for surfaces with boundary, and apply it to address a conjecture of Miura.
\end{abstract}

\section{Introduction}
\label{intro}

\subsection{Background}

In a publication from 1998 \cite{MCF_iso_ineq} we conjectured that the diameter of a
closed surface $M$ immersed in $\R^3$  is controlled by 
\beq
\label{conjectured_est}
\diam(M)<\frac{1}{\pi}\int_M |H|dvol,
\eeq
where $diam(M)=\max_{x,y\in M}|x-y|$ is the extrinsic diameter, and $H$ is the mean curvature, normalised as the average of the principal curvatures rather than the sum, so that the unit sphere has $|H|\equiv 1$. 
Here, and below, $M$ is implicitly assumed to be connected.
The constant $\frac{1}{\pi}$ could never be lowered, as can be seen by considering a long finite cylinder capped by hemispheres.
In \cite{topwent} we had already proved, using compensation properties of the Jacobian determinant, that in the special case that $M$ has \emph{constant} mean curvature, we have an estimate 
$$\diam(M)\leq\frac{\area(M)|H|}{2\pi}=\frac{1}{2\pi}\int_M |H|dvol,$$
which is twice as strong as \eqref{conjectured_est}. Moreover, 
in \cite{MCF_iso_ineq} we observed that optimising constants in an estimate of Leon Simon \cite{leon_CAG} gave us
$$\diam(M)<\frac{2}{\pi}\left[\area(M)\int_M|H|^2dvol\right]^\half,$$
which is half as strong as what would be given by weakening the conjecture  
\eqref{conjectured_est} with an application of the Cauchy-Schwarz inequality.

In 2008, we proved that such an inequality holds even in higher dimensions and co-dimensions, and even controlling the \emph{intrinsic} diameter, provided one is willing to trade the precise constant $\frac{1}{\pi}$ in \eqref{conjectured_est} for a dimensional constant.
\begin{thm}[From \cite{topping_diam_cmh}]
\label{cmh_thm}
For $m\in \N$, suppose that $M$ is an $m$-dimensional closed 
manifold smoothly immersed in $\R^n$. 
Then its intrinsic diameter $\diam_{int}$
and its mean curvature vector $\Hv$ are related by
\beq
\diam_{int}\leq C(m)\int_M |\Hv|^{m-1} dvol.
\eeq
\end{thm}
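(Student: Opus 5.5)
The plan is to follow the author's 2008 argument, which combines the Michael--Simon (monotonicity) inequality with a covering argument along a minimising geodesic. The first and main tool is a local lemma. It should say that for every $x\in M$ and every $R>0$, at least one of two things holds:
\[
\textrm{(a)}\quad \int_{B_R(x)} |\Hv|^{m-1}\,dvol \geq \delta(m)\,R ,
\qquad
\textrm{(b)}\quad \mathcal{H}^m(B_R(x)) \geq \delta(m)\,R^m ,
\]
where $B_R(x)$ is the intrinsic ball and $\delta(m)>0$ is a dimensional constant. To prove it I would introduce the maximal function
\[
M_x(R)=\sup_{r\in(0,R]} r^{-1}\Big(\int_{B_r(x)}|\Hv|\,dvol\Big)^{1/\ldots}\,\mathcal{H}^m(B_r(x))^{\ldots},
\]
together with the analogous \emph{area ratio} $\kappa_x(R)=\inf_{r\in(0,R]} r^{-m}\mathcal{H}^m(B_r(x))$. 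The analytic input is the Michael--Simon Sobolev inequality, applied to cutoff functions of the intrinsic distance. This gives a differential inequality for $V(r)=\mathcal{H}^m(B_r(x))$:
\[
V(r)^{(m-1)/m}\leq C\big(V'(r)+\textstyle\int_{B_r}|\Hv|\big).
\]
Suppose the area ratio ever becomes small, i.e.\ (b) fails at some scale. Then the term $\int_{B_r}|\Hv|$ must dominate on a range of scales. Hölder's inequality gives $\int_{B_r}|\Hv|\leq (\int_{B_r}|\Hv|^{m-1})^{1/(m-1)}V^{(m-2)/(m-1)}$, and integrating the ODE comparison then forces (a). As $r\downarrow 0$ we have $V(r)\approx\omega_m r^m$, so the ODE argument starts from a good initial value. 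The dichotomy follows from a continuity/first-bad-scale argument.

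Second, I will run the covering. Take $p,q\in M$ with intrinsic distance $d=\diam_{int}$, and a minimising geodesic $\gamma$ between them. For each point $x$ on $\gamma$, pick the scale $R_x$ to be the infimum of scales at which (a) holds. A Vitali covering of $\gamma$ by balls $B_{R_{x_i}}(x_i)$, disjoint after shrinking by a factor of $5$, has $\sum_i R_{x_i}\gtrsim d$ because $\gamma$ is minimising. Each ball then has to be paid for. If (a) holds at scale $R_x/5$ the ball contributes $\gtrsim R_x$ to $\int|\Hv|^{m-1}$. Otherwise (b) holds at that scale, and the ball contributes $\gtrsim R_x^m$ to the area.

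Third, I will convert the area contributions into curvature. Here I would use that for a closed submanifold
\[
m\,\mathcal{H}^m(M)=\Big|\int_M \langle \Hv, x\rangle\, dvol\Big| ,
\]
after centring suitably, combined with the same monotonicity at scale $R$. At scales $\leq R_x$, the failure of (a) together with (b) should force $R_x$ to be bounded by $C\,(\int_M |\Hv|^{m-1})$ anyway. This step is obtained by letting $R$ increase until (a) is forced, which must happen because $M$ is compact: the global area is finite while $R^m$ grows. So every $R_x$ is finite and is controlled by the local curvature integral at scale $R_x$. Summing over the disjoint balls then gives $d\leq C\sum R_{x_i}\leq C\int_M|\Hv|^{m-1}$.

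I expect the main obstacle to be the local dichotomy lemma, and in particular getting the exponent $m-1$ to appear naturally. The quantity $\int|\Hv|^{m-1}$ is scale-invariant like a length, and it has to be balanced against area ratios through Hölder's inequality in the Michael--Simon ODE. A subtler point is that intrinsic balls are used while Michael--Simon is extrinsic. I would handle this by applying it to functions of intrinsic distance, which are Lipschitz on $M$, so the inequality still applies.
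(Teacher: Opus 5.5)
Your architecture is the right one (a Michael--Simon local lemma plus a Vitali covering of a minimising geodesic, as in the paper's sketch of the 2008 proof), but the covering step does not close as written.

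\textbf{The covering step.} You take $R_x$ to be the infimum of scales at which (a) holds, and then shrink the balls $B_{R_{x_i}}(x_i)$ by a factor of $5$ to make them disjoint. By the very definition of $R_x$, (a) fails at scale $R_x/5$, so every disjoint ball lands in the area branch. Area cannot be traded back for $\sum_i R_{x_i}$. A bound $\sum_i R_{x_i}^m\lesssim\mathcal{H}^m(M)$ says nothing about $\sum_i R_{x_i}$. The Minkowski identity only gives $\mathcal{H}^m(M)\lesssim\diam(M)\int_M|\Hv|$, which has the wrong power of $|\Hv|$ and contains the quantity you are trying to bound.

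\textbf{The final summation.} Your last sentence silently sums $\int_{B_{R_{x_i}}(x_i)}|\Hv|^{m-1}$ over the \emph{unshrunk} balls, which are not disjoint. Their overlap need not be bounded. For example, place centres at arclength $2^j$ from a point $z\in\gamma$ with $R_{x_j}=5\cdot 2^{j-2}$. The shrunk balls are then pairwise disjoint, yet $z$ lies in every $B_{R_{x_j}}(x_j)$. So you only get a constant depending on the number of dyadic scales.

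\textbf{The local lemma.} As stated, with (a) or (b) at \emph{every} $R$, it is false. For $R>\diam_{int}$ we have $B_R(x)=M$, and both $R^{-1}\int_{B_R(x)}|\Hv|^{m-1}$ and $R^{-m}\mathcal{H}^m(B_R(x))$ tend to zero. What your first-bad-scale argument actually proves is the maximal version: if $\int_{B_r(x)}|\Hv|^{m-1}<\delta r$ for all $r\in(0,R]$, then $\mathcal{H}^m(B_r(x))\ge\delta r^m$ for all $r\in(0,R]$. Note that the H\"older absorption works only while the area ratio is still bounded below, which is exactly why the continuity argument from $r\downarrow 0$ is essential.

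\textbf{The fix.} Use the dichotomy only once, to produce good scales, and never pay with area. Since $\mathcal{H}^m(B_r(x))\le\mathcal{H}^m(M)<\infty$, the maximal version yields, for every $x$, a scale $R_x\in\bigl(0,\delta^{-1}\int_M|\Hv|^{m-1}\bigr]$ with $\int_{B_{R_x}(x)}|\Hv|^{m-1}\ge\delta R_x$. This is the analogue of \eqref{nice2Dest}, and it is what your ``let $R$ increase'' remark really gives. Now apply Vitali to the family $\{B_{R_x}(x)\}_{x\in\gamma}$ itself, obtaining disjoint balls $B_{R_{x_i}}(x_i)$ with $\gamma\subset\bigcup_i B_{5R_{x_i}}(x_i)$. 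Because $\gamma$ is minimising and $x_i\in\gamma$, each $B_{5R_{x_i}}(x_i)$ meets $\gamma$ in an arc of length at most $10R_{x_i}$. Hence
\[
d\le 10\sum_i R_{x_i}\le 10\,\delta^{-1}\sum_i\int_{B_{R_{x_i}}(x_i)}|\Hv|^{m-1}\le 10\,\delta^{-1}\int_M|\Hv|^{m-1}.
\]
The disjoint balls are then precisely balls achieving the curvature lower bound, as in the paper's description. The area branch and the Minkowski formula play no role.
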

The method of proof was inspired by our diameter estimates for Ricci flow \cite{diamrf}.
The key lemma, specialised to the simplest case of surfaces in $\R^3$, was the following.

\begin{lem}
Suppose that $M$ is a closed surface immersed in $\R^3$,
and $x\in M$. Then 
\beq
\label{nice2Dest}
\sup_{r>0} \frac{1}{r}\int_{B(x,r)}|H|dvol >\frac{\pi}{8},
\eeq
where $B(x,r)$ is the intrinsic ball centred at $x$ of radius $r$.
\end{lem}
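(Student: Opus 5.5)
We argue by contradiction. Suppose that for every $r>0$ we have $\int_{B(x,r)}|H|\,dvol\le \frac{\pi}{8}r$. The tool is the Michael--Simon/Simon monotonicity formula applied to the immersion, with extrinsic balls. For a surface in $\R^3$ and $\rho>0$, with $\ca(\rho)$ the area of the part of $M$ lying in the extrinsic ball $\{y:|y-x|<\rho\}$, the identity $\Delta_M |y-x|^2 = 4 + 4\langle \vec H,y-x\rangle$ (using $\vec H$ equal to half the trace of the second fundamental form, i.e.\ the paper's normalisation) gives
\[
\frac{d}{d\rho}\Big(\frac{\ca(\rho)}{\rho^2}\Big)\ \ge\ -\frac{2}{\rho^3}\int_{M\cap \{|y-x|<\rho\}} |H|\,|y-x|\,dvol\ \ge\ -\frac{2}{\rho^2}\int_{M\cap\{|y-x|<\rho\}}|H|\,dvol .
\]
Since the point $x$ has density at least $\pi$ (as $\rho\downto 0$ we have $\ca(\rho)/\rho^2\to\pi$ times the multiplicity), while $\ca(\rho)/\rho^2\to 0$ as $\rho\to\infty$ because $M$ is compact, integrating from $0$ to $\infty$ yields
\[
\pi\ \le\ 2\int_0^\infty \frac{1}{\rho^2}\int_{M\cap\{|y-x|<\rho\}}|H|\,dvol\,d\rho .
\]
This is the main inequality. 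The remaining task is to compare it with the assumed control on intrinsic balls.

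The obstacle is that the extrinsic ball $\{|y-x|<\rho\}\cap M$ can contain points intrinsically far from $x$, whereas the hypothesis only controls $|H|$ on intrinsic balls. To handle this I would localise. I would not use the whole surface; instead I would restrict to the connected piece $\Sigma_\rho$ of $M\cap\{|y-x|<\rho\}$ containing $x$, or better, use a cut-off function of the intrinsic distance. Concretely, I would run the monotonicity argument with a test function $\phi(d(x,\cdot)/R)$, where $d$ is intrinsic distance, on the fixed intrinsic ball $B(x,R)$. The extra error term then involves $|\grad\phi|$, which is supported where $R/2<d<R$. Because $|y-x|\le d(x,y)$, the intrinsic ball $B(x,\rho)$ is contained in the extrinsic ball, which is the wrong direction for a direct comparison. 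This is why a cut-off is needed: inside $\mathrm{supp}\,\phi\subset B(x,R)$ the mean curvature integral is bounded by $\frac{\pi}{8}R$ by hypothesis, and one integrates $\rho$ only up to about $R$.

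Bounding the boundary error term is the step I expect to be hardest. It is the area of the annulus $B(x,R)\setminus B(x,R/2)$ divided by $R^2$ times $|\grad\phi|$, and this area is not a priori controlled. I would try first to avoid it by a covering or stopping-time choice. Pick the scale $R$ where a quantity like $\area(B(x,R))/R^2$ is comparable to its value at $2R$ (such scales exist by a doubling/pigeonhole argument), or use the fact that the relevant integrals over extrinsic balls of radius $\rho\le R$ restricted to the component only see $B(x,R)$-type regions. Splitting $\int_0^\infty$ into $\rho<R$, handled by the hypothesis to give at most $2\cdot\frac{\pi}{8}\cdot(\text{log-type factor absorbed by choice of cut-off})$, and the tail, handled by choosing $R$ large relative to $\diam(M)$ so that $B(x,R)=M$, should produce a total strictly less than $\pi$, a contradiction. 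The factor $\frac18$ leaves room of $4\times$ for the constants lost in these cut-off and covering steps.
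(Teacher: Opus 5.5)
There is a genuine gap, at the step you describe as a `log-type factor absorbed by choice of cut-off'. Your monotonicity inequality is correct. Also, the extrinsic/intrinsic issue you spend most effort on is not the real obstacle. Set $V(r):=\area(B(x,r))$, $I(r):=\int_{B(x,r)}|H|\,dvol$ and $\theta(r):=V(r)/r^2$, and run the same first-variation computation directly on intrinsic balls, using $|y-x|\le r$ on $\partial B(x,r)$ and the coarea formula. This gives $2V\le rV'+2rI$, i.e.\ $\theta'(r)\ge -2I(r)/r^2$ for a.e.\ $r$, with no cut-off or annulus term, and hence $\pi\le 2\int_0^\infty I(r)r^{-2}\,dr$.

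Inserting $I(r)\le\frac{\pi}{8}r$ yields only $\pi\le\frac{\pi}{4}\int_0^\infty\frac{dr}{r}=\infty$. This is a divergence, not a loss of constants, so the `$4\times$ room' does not help. No cut-off removes it either; taking $R$ with $B(x,R)=M$ just returns you to the unlocalised inequality. Your hypothesis bounds $I(r)/r$ one scale at a time, while the integral sums these over all dyadic scales, and nothing bounds how many scales matter. Concretely, $I(r)=\frac{\pi}{8}\min\{r^2/a,\,r,\,b\}$ satisfies the hypothesis, yet $2\int_0^\infty I(r)r^{-2}\,dr=\frac{\pi}{4}\bigl(2+\log(b/a)\bigr)\ge\pi$ once $b\ge e^2a$. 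So monotonicity plus the hypothesis can never give a contradiction. Geometrically, monotonicity only says that under your hypothesis $\theta$ falls by at most $\frac{\pi}{4}$ per unit of $\log r$. That is compatible with $\theta$ decaying from $\theta(0^+)=\pi$ to $\theta(\infty)=0$ over a long range of scales.

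What is missing is a non-collapsing ingredient: a lower bound on the growth of $V$, which an isoperimetric (Sobolev) inequality with a mean curvature term provides. The present paper only quotes this lemma. To my knowledge the argument in \cite{topping_diam_cmh} is of this volume-ratio type, built on a Michael--Simon type inequality rather than on monotonicity.

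Here is how it works. Suppose $\sqrt{\area(\Omega)}\le C\bigl(\Length(\partial\Omega)+\int_\Omega|H|\bigr)$. Applied in functional form to Lipschitz cut-offs of $d(x,\cdot)$, it gives $\sqrt{V}\le C(V'+I)$ for a.e.\ $r$. With $V=\theta r^2$ and $I\le\delta r$ this becomes $r\theta'\ge C^{-1}\sqrt{\theta}-2\theta-\delta$. Small geodesic discs force $C\ge\frac{1}{2\sqrt{\pi}}$, so $\theta_*:=\frac{1}{16C^2}\le\frac{\pi}{4}<\pi$. At $\theta_*$ the right-hand side is $\frac{1}{8C^2}-\delta$. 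Since $\theta(0^+)=\pi$ and $\theta(r)=\area(M)/r^2\to0$, there is a last radius where the absolutely continuous function $\theta$ equals $\theta_*$. If $\delta<\frac{1}{8C^2}$, then $\theta'>0$ a.e.\ just beyond that radius, which contradicts $\theta<\theta_*$ there.

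With the sharp isoperimetric inequality (Brendle), $2\sqrt{\pi\,\area(\Omega)}\le\Length(\partial\Omega)+2\int_\Omega|H|$, the same computation reads $r\theta'\ge 2\sqrt{\pi\theta}-2\theta-2\delta$. At $\theta=\frac{\pi}{4}$ and $\delta=\frac{\pi}{8}$ the right-hand side is $\frac{\pi}{2}-\frac{\pi}{4}=\frac{\pi}{4}>0$, which proves the lemma with its strict inequality.
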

For any $M$, the 
quantity $\frac{1}{r}\int_{B(x,r)}|H|dvol$
is made arbitrarily
small by sending $r>0$ to either zero or infinity; the lemma says
that there is always some intermediate $r>0$ where it has a universal positive
lower bound.
The proof of Theorem \ref{cmh_thm} for surfaces in $\R^3$ involved taking a minimising geodesic of maximal length, and  covering a fixed proportion of it with a disjoint collection of balls $B(x,r)$ that achieved the maximum in \eqref{nice2Dest}. The diameter could be related to the sum of the radii of all these balls, which in turn could be related to the integral of $|H|$ over the union of these disjoint balls.

Since 2008, Theorem \ref{cmh_thm} and its generalisations have been applied to diverse topics
such as min-max theory for the construction of prescribed mean curvature surfaces \cite{mazurowski20}, almost-CMC hypersurfaces \cite{ciraolo_maggi_21}, mean curvature flow \cite{liu_etal27, lin_sesum29}, 
volume-preserving mean curvature flow \cite{li22, julin_etal24, huang32, julin36}, 
surface area preserving mean curvature flow \cite{huang28},
surface diffusion flows \cite{mccoy_etal23}, 
K\"ahler geometry \cite{tosatti25, zhang37, bednarek42}, isoperimetric problems \cite{julin30},
almost-umbilic hypersurfaces \cite{roth33}  and mathematical relativity \cite{huang38},  amongst others \cite{bates39, speciel35}.

One case in which the conjecture in \eqref{conjectured_est}
has already been established is the case of simply connected axisymmetric  surfaces.
This was initially dealt with by Helmers in \cite[Lemma 2.4]{helmers}, who applied the result to give a compactness 
result in the study of two-phase lipid bilayer membranes.
Miura \cite{miura_selecta} gave an 
alternative approach to the axisymmetric case and gained substantial extra information concerning 
error terms and the behaviour of a sequence of axisymmetric surfaces as the inequality is saturated.
Li-Wan-Zhang \cite{li_wan_zhang} developed these ideas in higher dimensions.

Theorem \ref{cmh_thm} has also been generalised in various directions. We will shortly discuss the case of surfaces with boundary. We highlight also the efforts to generalise the result to spaces of weaker regularity (see particularly Menne--Scharrer \cite{MS1}
and Scharrer \cite{scharrer_spontaneous})  to ambient Riemannian manifolds (see for example Wu--Zheng \cite{wu_zheng},  
Chambers--Marx-Kuo \cite{C_MK} and Impera--Rimoldi--Veronelli \cite{IRV}), 
to weighted ambient manifolds (de Medeiros \cite{Medeiros}),
and to conformally flat ambient spaces (Flaim-Scharrer \cite{flaim_scharrer}).


\subsection{New results}

The purpose of this paper is to prove the original conjecture for closed embedded surfaces in $\R^3$. In fact, we prove a  stronger result that requires the elementary  concept of \emph{width} of $M$.
\begin{defn}
Given a unit vector $e\in \R^3$, the \emph{width} of a closed embedded surface $M$ in $\R^3$, or indeed of any closed bounded subset $M$ of $\R^3$, is defined to be 
$$\w_M(e):=\max_{x\in M}\langle x,e\rangle - \min_{x\in M}\langle x,e\rangle.$$
\end{defn}
Thus, if we clamp  $M$ in a vice with two jaws perpendicular to $e$, then the width $\w_M(e)$ is the distance between these two jaws.

\begin{thm}[Main theorem]
\label{main_thm}
Suppose that $M$ is a closed surface embedded in $\R^3$.
Then 
\beq
\label{gen_conc}
\int_M H_+ dvol\geq \half \int_{S^2}\w_M(e) d\si(e)>\pi \diam(M).
\eeq
In particular,
\beq
\diam(M)<\frac{1}{\pi}\int_M |H|dvol.
\eeq
\end{thm}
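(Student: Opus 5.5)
The plan is to prove the two inequalities in \eqref{gen_conc} separately. The second one is a Cauchy--Crofton computation. The first one is a reduction to the convex (Minkowski) case by integral geometry, and it is where embeddedness enters.

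\emph{Second inequality.} Choose $x,y\in M$ with $|x-y|=\diam(M)$ and set $u=(y-x)/|y-x|$. For every $e\in S^2$ we have $\w_M(e)\geq |\langle y-x,e\rangle|=\diam(M)|\langle u,e\rangle|$. Integrating and using $\int_{S^2}|\langle u,e\rangle|d\si(e)=2\pi$ gives
$$\half\int_{S^2}\w_M(e)\,d\si(e)\geq \pi\diam(M).$$
For strictness, note that $M$ is a surface and so is not contained in the segment $[x,y]$. Hence there is $z\in M$ off the line through $x,y$, and for $e$ in an open set of directions we get $\langle z,e\rangle>\max(\langle x,e\rangle,\langle y,e\rangle)$. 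The inequality is therefore strict on a set of positive measure. The final claim then follows from $H_+\leq |H|$.

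\emph{First inequality, convex case.} If $M=\partial C$ with $C$ convex, Minkowski's formula gives $\int_M H\,dvol = 2\pi\cdot\frac{1}{4\pi}\int_{S^2}\w_M\,d\si$, which is exactly $\half\int_{S^2}\w_M\,d\si$, with $H\geq 0$. I will verify the normalisation on the unit sphere: both sides equal $4\pi$.

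\emph{First inequality, general case.} I will use the slicing form of this identity rather than the convex hull. The convex hull approach fails because the developable parts of the hull boundary not lying on $M$ may carry mean curvature. Writing $H$ as the average over tangent directions of the normal curvature, and applying the coarea formula to the foliation of $\R^3$ by parallel planes with unit normal $a$, I expect
$$\int_M H_+\,dvol \;\geq\; c\int_{S^2}\int_{\R}\Big(\sum_{\gamma\subset M\cap\{\langle x,a\rangle=t\}} \int_\gamma \kappa_+\,(\cdots)\,ds\Big)\,dt\,d\si(a).$$
Here $\kappa$ is the planar curvature of the slice curve $\gamma$, signed with respect to the outward normal of the region $\Om$ bounded by $M$, and $(\cdots)$ is the appropriate angle weight. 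The same formula holds with equality for convex bodies, where $\kappa\geq 0$. Since $M$ is embedded, each slice $\Om\cap\{\langle x,a\rangle=t\}$ is a planar domain bounded by embedded curves. Its outer boundary components have total positive curvature at least that of their convex hulls, and this is the planar fact to be proved. This comparison should reduce the inequality to the same slicing identity applied to $\mathrm{conv}(M)$, whose right-hand side is $\half\int_{S^2}\w_M\,d\si$ by the convex case.

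\emph{Main obstacle.} The hardest step is the slice-wise comparison. I must show that for an embedded closed planar curve the \emph{weighted} integral $\int \kappa_+\,(\cdots)$ dominates the corresponding quantity for its convex hull, uniformly enough to be integrated over $t$ and $a$. My first attempt is the Gauss-map argument in the plane. Every outward normal direction $\th\in S^1$ is attained at a point of positive curvature on the outer boundary curve, namely the support point in direction $\th$. Changing variables via the tangent angle would then express both sides as integrals over $\th$ of support-function data, and these agree at the support points.
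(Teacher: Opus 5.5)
Your proof of the second inequality is correct; it is the paper's Remark~\ref{proof_reduction}, with a more explicit strictness argument. Your convex-case normalisation is also right. The gap is in the first displayed slicing inequality: it runs in the wrong direction.

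For a slicing direction $a$, let $T_a$ be the unit tangent at $x\in M$ of the slice curve through $x$. Then $\ka_a|\grad h_a|=A(T_a,T_a)$ (this is \eqref{A_kappa}, i.e.\ Meusnier). Combining the coarea formula with the parametrisation of $S^2$ by $T_a$ and $\langle a,\nu\rangle$ gives
\[
\frac{1}{4\pi}\int_{S^2}\int_\R\sum_\gamma\int_\gamma(\ka_a)_+\,ds\,dt\,d\si(a)=\int_M\frac{1}{2\pi}\int_0^{2\pi}\big(k_1\cos^2\th+k_2\sin^2\th\big)_+\,d\th\,dvol .
\]
This is the normalisation that gives equality for convex bodies. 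Any weight depending only on $\langle a,\nu\rangle$ simply factors out, so it changes nothing here.

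However, $H_+$ is the positive part of the average $\frac{1}{2\pi}\int_0^{2\pi}(k_1\cos^2\th+k_2\sin^2\th)\,d\th$. That is at most the average of the positive parts, and strictly less wherever $k_1>0>k_2$. So as soon as $M$ has a region of negative Gauss curvature (any torus of revolution, say), the right-hand side of your display strictly exceeds $\int_M H_+\,dvol$. Taking positive parts slice by slice destroys exactly the cancellation between positive and negative normal curvatures that $H$ records.

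A symptom of this is that, once positive parts are taken, the step you call the main obstacle is trivial. The outermost curve of any non-empty slice already has $\int_\gamma\ka_+\,ds\geq\int_\gamma\ka\,ds=2\pi$. What your scheme actually proves is the weaker bound with $\frac{1}{2\pi}\int_0^{2\pi}(k_1\cos^2\th+k_2\sin^2\th)_+\,d\th$ in place of $H_+$, and that does not imply the theorem.

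If you keep the signed curvature, you get the true identity $\int_M H\,dvol=\half\int_{S^2}\int_\R\chi(\Om_{e,t})\,dt\,d\si(e)$ (Lemma~\ref{crofton_lem} applied to $\Om$). But then every inner curve contributes $-1$; a solid torus has annular slices with $\chi=0$. Nothing then bounds the integrand below by $1$ on the interval $\langle M,e\rangle$.

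The missing idea is a way to remove the holes from all slices at once without letting the total mean curvature exceed $\int_M H_+\,dvol$. The paper does this by enlarging $\Om$ to a mean-convex hull $K$ (Spadaro), with $\Om\subset K\subset\mathrm{conv}(\Om)$:
\begin{itemize}
\item The new boundary $\partial K\setminus M$ is minimal, so it contributes nothing to $\int_{\partial K}H_{\partial K}$.
\item On the contact set $\partial K\cap M$ the two mean curvatures agree almost everywhere. This is because the $C^{1,1}$ difference of the local graphs is $\geq 0$ and its Hessian vanishes a.e.\ on its zero set. Hence $\int_{\partial K}H_{\partial K}\,dvol\leq\int_M H_+\,dvol$.
\item Flat discs are minimal surfaces and $K$ is a global barrier, so no planar slice of $K$ has inner curves. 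Therefore $\chi(K_{e,t})\geq 1$ for every regular $t$ in $\langle M,e\rangle$.
\end{itemize}
The Crofton formula is then applied to $K$, not to $M$ or to $\mathrm{conv}(M)$.
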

Here we are adopting the most standard sign convention that if $\nu$ is an outward normal vector to $M$ then the mean curvature vector is $-H\nu$, that is, the mean curvature of the unit sphere is $+1$. We are also writing $H_+$ to mean $\max\{H,0\}$
and writing $d\si$ for the standard volume element on the unit sphere $S^2\embed \R^3$.

\begin{rmk}
\label{C11regthm}
The statement of Theorem \ref{main_thm} makes sense even when $M$ is of only $C^{1,1}$ regularity because this regularity is enough to be able to view the mean curvature as an $L^\infty$ function on $M$. It is this more general result that we prove in this paper.
\end{rmk}

\begin{rmk}
\label{proof_reduction}
The strict inequality of \eqref{gen_conc} is elementary: Given a closed bounded connected set $M\subset \R^3$, if $x,y\in M$ are picked so that $\diam(M)=|x-y|$, then $\w_M(e)\geq |\langle y-x,e\rangle|$ by definition, with 
equality for all $e\in S^2$ only if $M$ is the line segment connecting $x$ and $y$.
Therefore 
$$\int_{S^2}\w_M(e) d\si(e)\geq \int_{S^2}|\langle y-x,e\rangle| d\si(e)
=|x-y|\int_{-1}^1 |h|\,2\pi dh=2\pi \diam(M),$$
by Archimedes' cylindrical representation of the area element of a 2-sphere,
with equality only if $M$ is the line segment connecting $x$ and $y$, which is not a surface.
\end{rmk}

\begin{rmk}
\label{mink_rmk}
As we discuss later, the first inequality of \eqref{gen_conc} becomes equality in the case of convex surfaces, and in this special case dates back at least to Minkowski in 1903 (see equations (29) and (30) on page 458 of 
\cite{minkowski}). The relevance of this equality to diameter estimates was already identified by Miura \cite{miura_selecta}.
\end{rmk}

We now turn to the topic of the analogous diameter estimates on surfaces with boundary. This general topic was pioneered by Menne and Scharrer with more flexibility on dimensions and regularity, but without sharp constants, see e.g. \cite[Corollary 2]{MS1}, and by Miura, e.g. \cite[Theorem 1.1]{miura_sns}.
Earlier work with a restrictive convexity assumption is due to 
Paeng \cite{paeng7}.
A clever idea of Miura \cite{miura_sns} is that one can prove diameter estimates for 
surfaces with boundary by applying diameter estimates for closed surfaces to the 
$\ep$-distance set of the surface with boundary.
Combining this idea with our new diameter estimate will give the following.
\begin{thm}[The case with boundary]
\label{bdry_thm}
Suppose that $M$ is a smooth compact surface with non-empty boundary, embedded in $\R^3$.
Then 
\beq
\label{bdry_est}
\int_M |H| dvol + \frac{\pi}{2}\Length(\partial M)
\geq \half \int_{S^2}\w_M(e) d\si(e)>\pi \diam(M).
\eeq
In particular,
\beq
\label{bdry_conseq}
\diam(M)<\frac{1}{\pi}\int_M |H|dvol + \half \Length(\partial M).
\eeq
\end{thm}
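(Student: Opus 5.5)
We follow Miura's idea and apply Theorem \ref{main_thm} (in its $C^{1,1}$ form, Remark \ref{C11regthm}) to the boundary of the $\ep$-neighbourhood of $M$. For small $\ep>0$, let $M_\ep:=\{x\in\R^3:\mathrm{dist}(x,M)\leq\ep\}$ and $\Si_\ep:=\partial M_\ep$. The argument has four steps: check that $\Si_\ep$ is an admissible $C^{1,1}$ embedded closed surface, relate its width to that of $M$, compute $\int_{\Si_\ep}H_+$ up to $o(1)$, and pass to the limit.

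\textbf{Regularity and width.} Since $M$ is smooth and compact, for $\ep$ smaller than a reach-type constant (the normal injectivity radius, including near $\partial M$) the distance function to $M$ is $C^{1,1}$ on $\{0<\mathrm{dist}<2\ep_0\}$. Hence $\Si_\ep$ is a closed $C^{1,1}$ embedded surface. It is connected because $M$ is connected, and if it had several components we could apply the theorem to the outer one. It is the envelope of a tube around $M$. For the width, $\w_{M_\ep}(e)=\w_M(e)+2\ep$ exactly, since support functions add under Minkowski sum with a ball, and $\w_{\Si_\ep}=\w_{M_\ep}$. Theorem \ref{main_thm} therefore gives
\[
\int_{\Si_\ep}H_+\,dvol\;\geq\;\half\int_{S^2}\w_M(e)\,d\si(e)+4\pi\ep .
\]

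\textbf{Decomposing $\Si_\ep$.} This is the key computation. Split $\Si_\ep$ into three pieces.
\begin{itemize}
\item The two parallel sheets $x\pm\ep\nu(x)$ over $x\in M$ (away from the boundary). Here the principal curvatures are $\kappa_i/(1\mp\ep\kappa_i)$ and the area element is $(1\mp\ep\kappa_1)(1\mp\ep\kappa_2)$. The integrand $H\,dvol$ on each sheet is $\pm H_M + O(\ep)$. Since $M$ has bounded curvature, the two sheets together contribute at most $\int_M(H_+ + (-H)_+) + O(\ep) = \int_M|H|+O(\ep)$ to $\int H_+$. One sheet sees $H$ and the other $-H$, so the positive parts sum to $|H|$.
\item The half-tube around $\partial M$: points $y+\ep(\cos\th\, n+\sin\th\,\nu)$ for $y\in\partial M$, $\th\in[-\pi/2,\pi/2]$, where $n$ is the outward conormal.
\item Possibly small corrections where the boundary curve's curvature interacts with the sheets.
\end{itemize}

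\textbf{The half-tube.} On a half-tube of radius $\ep$ around a curve, one principal curvature is $1/\ep$ and the other is $O(1)$. The area element is $\ep(1+O(\ep))\,d\th\,ds$, so $H\,dvol=(\half\cdot\frac1\ep+O(1))\,\ep\,d\th\,ds$. Integrating over $\th\in[-\pi/2,\pi/2]$ gives $\frac{\pi}{2}\Length(\partial M)+O(\ep)$. We must check carefully that the tube piece is exactly a half-tube. Since $M$ is smooth up to the boundary, the nearest-point projection onto $\partial M$ covers exactly the half-circle of normal directions pointing away from $M$.

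\textbf{Limit and conclusion.} Letting $\ep\to0$ gives
\[
\int_M|H|+\frac{\pi}{2}\Length(\partial M)\;\geq\;\half\int_{S^2}\w_M\,d\si .
\]
The strict inequality $\half\int\w_M>\pi\diam(M)$ follows from Remark \ref{proof_reduction}, since $M$ is a surface and not a segment. Dividing by $\pi$ gives \eqref{bdry_conseq}.

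The main obstacle is making the error terms uniformly $O(\ep)$. This means justifying that $\Si_\ep$ is globally $C^{1,1}$ and embedded for small $\ep$, including at the seams between the sheets and the half-tube. It also means verifying that distant parts of $M$ do not interfere, which holds because $M$ is compact and embedded, so it has positive reach. These facts are standard tubular-neighbourhood arguments for a compact smooth surface with boundary, and I would handle them first.
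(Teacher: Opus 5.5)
Your proposal is correct and follows essentially the same route as the paper: apply the $C^{1,1}$ main theorem to the boundary of the $\ep$-neighbourhood of $M$, split it into the two parallel sheets (which contribute $\int_M|H|+O(\ep)$ to $\int H_+$) and the half-tube around $\partial M$ (which contributes $\frac{\pi}{2}\Length(\partial M)+O(\ep)$), and let $\ep\downto 0$. Your exact identity $\w_{\Si_\ep}(e)=\w_M(e)+2\ep$ is a slightly sharper substitute for the paper's uniform convergence of widths, and the third `correction' piece you hedge on is in fact empty, since for small $\ep$ the level set is exactly the union of the two sheets over all of $M$ and the half-tube.
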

Both coefficients on the right-hand side of \eqref{bdry_conseq} are sharp: The first is seen to be sharp by considering a long cylinder capped at one end. The second is seen to be sharp by considering a long thin rectangle in the plane with its corners smoothed.

It is notable that the  reason the application of Miura's method gives us a sharp
estimate, rather than one with the coefficient of the mean curvature term degraded by a factor of two, as in the past, will be because our main Theorem \ref{main_thm} involved the integral of the \emph{positive part} of the mean curvature, rather than the absolute value of the mean curvature.

By applying Theorem \ref{bdry_thm} to minimal surfaces with boundary, we resolve a conjecture of Miura \cite{miura_sns} for minimal surfaces with boundary embedded 
in $\R^3$.
\begin{cor}
\label{min_surf_cor}
Suppose that $M$ is a smooth compact minimal surface with  boundary, embedded in $\R^3$.
Then 
$$\Length(\partial M)> 2\diam(\partial M)=2\diam(M).$$
\end{cor}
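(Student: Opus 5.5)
We apply Theorem \ref{bdry_thm} with $H\equiv 0$; we may assume $\partial M$ is non-empty, since a compact minimal surface in $\R^3$ cannot be closed. The estimate \eqref{bdry_conseq} then gives directly
$$\diam(M)<\frac{1}{\pi}\int_M |H|\,dvol+\half\Length(\partial M)=\half\Length(\partial M),$$
that is, $\Length(\partial M)>2\diam(M)$. It remains to prove the identity $\diam(\partial M)=\diam(M)$.

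For this I would use the convex hull property of minimal surfaces. The inequality $\diam(\partial M)\leq\diam(M)$ is trivial because $\partial M\subset M$. For the reverse inequality, fix any $y\in\R^3$ and consider $f(x)=|x-y|^2$ restricted to $M$. Minimality gives $\lap_M x=0$ for the coordinate functions, so
$$\lap_M f=2|\grad_M x|^2=4>0.$$
Thus $f$ is strictly subharmonic on $M$, and by the maximum principle it attains its maximum on $\partial M$ and not in the interior. Equivalently, the closed ball $\{x:|x-y|\le \max_{\partial M}|\cdot-y|\}$ contains $M$.

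Now take $x_0,y_0\in M$ with $|x_0-y_0|=\diam(M)$. Applying the above with $y=y_0$, there is a point $x_1\in\partial M$ with $|x_1-y_0|\ge|x_0-y_0|$. Applying it again with $y=x_1$, there is $y_1\in\partial M$ with $|x_1-y_1|\ge|x_1-y_0|$. Hence $\diam(\partial M)\ge|x_1-y_1|\ge\diam(M)$, and so $\diam(\partial M)=\diam(M)$.

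Each step here is routine. The only care needed is that the maximum principle applies on the compact surface $M$ with boundary, which holds since $M$ is smooth up to the boundary.
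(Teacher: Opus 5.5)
Your proposal is correct and follows the paper's proof: apply Theorem~\ref{bdry_thm} with $H\equiv 0$, and use the convex hull property of minimal surfaces to get $\diam(M)=\diam(\partial M)$. Your argument via strict subharmonicity of $|x-y|^2$ (with $\lap_M|x-y|^2=4$) and the two-step choice of boundary points is a correct, self-contained verification of the fact the paper simply cites.
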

As throughout, we are assuming our manifold $M$ is connected.
By the convex hull property of minimal surfaces, we know that $\diam(M)=\diam(\partial M)$, so this is indeed a direct corollary of Theorem \ref{bdry_thm}. It is also sharp, by the example from before of a long thin rectangle in the plane with its corners smoothed.

The corollary is trivial in the case that the boundary $\partial M$ is connected, whatever the topology of $M$.
A natural restricted case of Corollary \ref{min_surf_cor} in which a stronger estimate was already effectively known is the case that the minimal surface is a free-boundary minimal surface within the unit ball in $\R^3$. Fraser and Schoen \cite{FS} proved in this case that $\Length(\partial M)\geq 2\pi$, and  clearly have 
$\diam(M)\leq 2$ when the surface lies within the unit ball, so 
$$\Length(\partial M)\geq \pi\diam(M)$$ 
in this special case.

Following Miura \cite{miura_sns}, we also note the following immediate consequence of 
Corollary \ref{min_surf_cor}: if we are given a smooth contour $\Ga\subset \R^3$ consisting of
a finite collection of pairwise disjoint simple closed curves such that 
$\Length(\Ga)\leq 2\diam(\Ga)$, then $\Ga$ cannot be spanned by a compact connected 
embedded minimal surface.

\section{Proofs}

Given Remark \ref{proof_reduction}, the main Theorem \ref{main_thm} 
requires us to prove the first inequality of \eqref{gen_conc}.
As one might guess from the nature of the inequality, there is an integral geometric identity  within the proof. We start by giving that identity, even though it is currently disconnected from both the hypotheses and the conclusion of Theorem \ref{main_thm}.
For $e\in S^2\embed \R^3$ and $t\in\R$, we denote the plane orthogonal to $e$ at `height' $t$ by
$$P_{e,t}:=\{x\in\R^3\ :\ \langle x,e\rangle =t\}.$$
\begin{lem}[Crofton formula for total mean curvature]
\label{crofton_lem}
Suppose $K\subset\R^3$ is the closure of an open bounded set whose boundary is 
$C^{1,1}$. Then
\beql{crofton_identity}
\int_{\partial K} H dvol = \half\int_{S^2}\int_{\R} \chi(K_{e,t})dt d\si(e),
\eeq
where 
$K_{e,t}:=K\intersect P_{e,t}$ is a slice of $K$, and $\chi$ represents the Euler characteristic.
\end{lem}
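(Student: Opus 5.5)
Fix a unit vector $e$ and first establish, for almost every $t$, a planar identity on the slice. Then integrate in $t$ and average over $e\in S^2$.

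\textbf{Step 1: the plane curve identity.} For almost every $t$ (by Sard's theorem applied to the height function $\langle x,e\rangle$ on $\partial K$), the plane $P_{e,t}$ meets $\partial K$ transversally. Then $K_{e,t}$ is a compact planar domain with $C^{1,1}$ boundary curves $\partial K\cap P_{e,t}$. Gauss--Bonnet for planar domains gives
$$2\pi\chi(K_{e,t})=\int_{\partial K_{e,t}}\kappa\,ds,$$
where $\kappa$ is the signed curvature of the boundary curves, positive for convex pieces. Since $\kappa\in L^\infty$ in the $C^{1,1}$ setting, this holds for $C^{1,1}$ curves by approximation. So it suffices to prove
$$\int_{\partial K}H\,dvol=\frac{1}{4\pi}\int_{S^2}\int_\R\int_{\partial K\cap P_{e,t}}\kappa\,ds\,dt\,d\si(e).$$

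\textbf{Step 2: coarea and Meusnier.} For fixed $e$, apply the coarea formula on $\partial K$ to $f(x)=\langle x,e\rangle$, whose tangential gradient has length $|e^T|=\sin\theta$, where $\theta$ is the angle between $e$ and the outward normal $\nu$. This gives
$$\int_\R\int_{\partial K\cap P_{e,t}}\kappa\,ds\,dt=\int_{\partial K}\kappa_e(x)\sin\theta\,dvol.$$
Here $\kappa_e(x)$ is the curvature at $x$ of the slice curve. The slice curve has tangent $T\perp e$ in $T_x\partial K$, and its curvature vector lies in the plane $P_{e,t}$. By Meusnier's theorem, the normal curvature $II(T,T)$ equals the component of the curve's curvature vector along $-\nu$. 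The in-plane normal of the curve makes angle $\pi/2-\theta$ with $\nu$, so $\kappa_e=II(T,T)/\sin\theta$. The $\sin\theta$ therefore cancels, which is the key point. The orientations must be checked, so that $\kappa>0$ on convex slices matches the convention $H>0$ on spheres. We are left with
$$\int_{\partial K}\int_{S^2}II(T_e,T_e)\,d\si(e)\,dvol.$$

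\textbf{Step 3: averaging over $e$.} Use Fubini to swap the integrals. At a fixed point $x$, write $T_e$ as the unit vector in direction $\nu\times e$. Parametrise $e$ by its angle $\theta$ from $\nu$ and an azimuth $\phi$. Then $T_e$ is a rotation of the horizontal direction by angle $\phi$, so averaging $II(T_e,T_e)$ over $\phi$ gives $\tfrac12\operatorname{tr}II=H$. Hence
$$\int_{S^2}II(T_e,T_e)\,d\si=4\pi H,$$
and multiplying by $\frac{1}{4\pi}$ gives exactly \eqref{crofton_identity}.

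\textbf{Main obstacle.} The main obstacle is the low regularity. We must justify Sard's theorem, the coarea formula and the slice Gauss--Bonnet for $C^{1,1}$ surfaces. We also need the non-transversal set $\{\theta=0\}$ and the critical heights to have measure zero and to contribute nothing. Since we take $\kappa_e\sin\theta=II(T,T)$, which is bounded, integrability is fine.

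I would first prove the identity for smooth $\partial K$. Then I would approximate a $C^{1,1}$ boundary by smooth boundaries converging in $C^{1,\alpha}$ with uniformly bounded, a.e.\ convergent curvatures, so that the left side converges by dominated convergence. For the right side, I would pass to the limit for almost every $(e,t)$: transversal slices are stable under $C^1$ perturbation, so $\chi$ is eventually constant there. The bound $|\chi(K_{e,t})|$ is controlled via Step 1 by $\int\kappa\,ds$, whose integral is uniformly bounded, which gives the domination needed to pass to the limit.
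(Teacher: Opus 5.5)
Your Steps 1--3 are the paper's proof: Gauss--Bonnet on regular slices, coarea for $h_e(x)=\langle x,e\rangle$ combined with the Meusnier relation $A(T_e,T_e)=\kappa_e|\nabla h_e|$ (which makes the integrand the bounded function $A(T_e,T_e)$), and Archimedes' parametrisation of $S^2$ giving $\int_{S^2}A(T_e,T_e)\,d\sigma(e)=4\pi H$. Your signs and constants are right. The gap is in the regularity argument you actually commit to, the smoothing. In its last step you bound $|\chi(K^j_{e,t})|\le g_j(e,t):=\frac{1}{2\pi}\int_{\partial K^j_{e,t}}|\kappa_j|\,ds$ and note that $\sup_j\int_{S^2}\int_\R g_j\,dt\,d\sigma<\infty$. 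That is a uniform $L^1$ bound, not a dominating function independent of $j$. It does not justify $\int\!\int\chi(K^j_{e,t})\to\int\!\int\chi(K_{e,t})$, because it allows the mass to concentrate. Near the critical heights of $h_e$, the slice curvature $\kappa_j=A_j(T_e,T_e)/|\nabla h_e|$ is not uniformly controlled. A uniform $C^{1,1}$ bound also does not bound the number of components of level sets of a height function. To repair this you would need uniform integrability, for example by proving $g_j\to g$ in $L^1(S^2\times\R)$ and using the generalised dominated convergence theorem. That needs convergence of curvatures along individual slice curves, not merely a.e.\ on the surfaces, and you have not supplied it.

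The missing idea is that the approximation is unnecessary, because the required genericity is available directly at $C^{1,1}$ regularity. The outward normal $\nu:\partial K\to S^2$ is Lipschitz, so the area formula gives $\int_{S^2}\mathcal{H}^0(\nu^{-1}(e))\,d\sigma(e)=\int_{\partial K}|\det\nabla\nu|\,dvol<\infty$. Hence for a.e.\ $e$ the critical set $\nu^{-1}(e)\cup\nu^{-1}(-e)$ of $h_e$ is finite. So $h_e$ has finitely many critical values, and every other slice is bounded by finitely many disjoint $C^{1,1}$ closed curves. This replaces your appeal to Sard's theorem, whose classical form needs $C^2$ here; for $C^{1,1}$ you would have to quote Bates' refinement. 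Gauss--Bonnet holds for these curves because their tangent angle is Lipschitz. The coarea formula for the Lipschitz function $h_e$ applies because $\int_{\partial K}|\kappa_e|\,|\nabla h_e|\,dvol=\int_{\partial K}|A(T_e,T_e)|\,dvol\le\int_{\partial K}|A|\,dvol<\infty$, which is the bound you already noted. With this, your Steps 1--3 apply verbatim to the $C^{1,1}$ surface itself, and no limiting argument is needed.
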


\begin{rmk}
\label{IFT_rmk}
Because $\partial K$ is assumed to be $C^{1,1}$, the Gauss map $\nu:\partial K \to\ S^2$, chosen to be the outward unit normal to $K$, is Lipschitz. Therefore, the area formula applies to give 
$$\int_{S^2} \h^0(\nu^{-1}(e)) d\si(e) = \int_{\partial K}|\det(\grad\nu)|dvol<\infty.$$
Thus, for almost every $e\in S^2$, both $\nu^{-1}(e)$ and $\nu^{-1}(-e)$ are finite.
The union of these two sets is the set of critical points of the 
height function $h_e:\partial K\to\R$ defined by $h_e(x)=\langle x,e\rangle$,
and thus $h_e$ has a finite set of critical values. 
In particular, for each $t\in \R$ off this finite set of critical values, the level set $h_e^{-1}(t)$ is the  
union of a finite number of pairwise disjoint  $C^{1,1}$ embedded closed curves
in $P_{e,t}$, which together form the boundary of $K_{e,t}$. This allows us to divide up the curves into `outer' and `inner' curves in a natural way. A boundary curve would be `inner' if that boundary component would disappear if we were to add  the disc it bounds in $P_{e,t}$ to $K_{e,t}$; otherwise it is `outer'.
For such $(e,t)$, in particular for 
$(e,t)$ off a set 
of measure zero in the domain of integration $S^2\times \R$, the 
integrand $\chi(K_{e,t})$ is the number of outer curves minus the number of inner curves, as just described. See Figure \ref{Ket_fig}.
\end{rmk}

The Crofton formula of Lemma \ref{crofton_lem} could be extracted as a special case of results in the literature. The result, albeit with stronger regularity hypotheses, can be found in the work of Santaló \cite[(4.28), page 37]{santalo} from 1952, 
while less regular surfaces are handled by an instance of a result of 
Federer \cite[Theorem 6.13]{federer} from 1959. We give a self-contained proof for the convenience of the reader.

\begin{proof}[Proof of Lemma \ref{crofton_lem}]
We adopt the notation and observations of Remark \ref{IFT_rmk}.
For every $(e,t)\in S^2\times \R$, and at every regular point of $h_e$ that we can find on $\partial K_{e,t}$, we write $n$ for the outward unit normal to $\partial K_{e,t}$ within $P_{e,t}$, and define a unit tangent vector to $\partial K_{e,t}$ within $P_{e,t}$ by $T_e:=e\times n = \frac{e\times \nu}{|e\times \nu|}$. 
See Figure \ref{Ket_fig}.

\begin{figure}
\captionsetup{width=0.85\textwidth}
\centering
\begin{tikzpicture}[
  x=1cm,y=1cm,
  font=\normalsize,
  line cap=round,
  line join=round,
  boundary/.style={draw=black,line width=0.8pt},
  vector/.style={draw=black,line width=0.8pt,
    -{Stealth[length=2.2mm,width=1.7mm]}},
  leader/.style={draw=black!65,line width=0.45pt,
    -{Stealth[open,length=1.8mm,width=1.3mm]}},
  vector label/.style={inner sep=1.5pt}
]
\definecolor{sliceBlue}{RGB}{220,235,247}

\def\largeouter{
  (9.20,0.15)
  .. controls (9.20,1.55) and (8.52,3.00) .. (7.10,3.00)
  .. controls (5.90,3.00) and (3.45,2.75) .. (3.45,2.15)
  .. controls (3.45,1.55) and (4.95,1.05) .. (4.95,0.15)
  .. controls (4.95,-0.70) and (3.70,-1.03) .. (3.70,-1.80)
  .. controls (3.70,-2.45) and (4.45,-2.55) .. (5.40,-2.55)
  .. controls (7.55,-2.55) and (9.20,-1.50) .. (9.20,0.15)
  -- cycle
}

\def\innercurve{
  (7.00,-1.35)
  .. controls (6.50,-1.35) and (6.50,-0.75) .. (6.55,-0.25)
  .. controls (6.60,0.25) and (6.22,0.85) .. (6.22,1.20)
  .. controls (6.22,1.65) and (6.75,1.92) .. (7.35,1.80)
  .. controls (8.10,1.65) and (8.42,1.15) .. (8.42,0.50)
  .. controls (8.42,-0.40) and (7.70,-1.35) .. (7.00,-1.35)
  -- cycle
}

\def\smallouter{
  (1.25,0.15)
  .. controls (1.25,0.85) and (1.80,1.23) .. (2.50,1.23)
  .. controls (3.20,1.23) and (3.75,0.90) .. (3.75,0.30)
  .. controls (3.75,-0.22) and (3.15,-0.62) .. (2.60,-0.68)
  .. controls (1.90,-0.75636) and (1.25,-0.40) .. (1.25,0.15)
  -- cycle
}

\path[even odd rule,fill=sliceBlue]
  \largeouter \innercurve \smallouter;

\draw[boundary] \largeouter;
\draw[boundary] \innercurve;
\draw[boundary] \smallouter;


\node[anchor=east] at (2.42,2.20) {Outer curves};
\draw[leader] (2.49,2.25)
  .. controls (2.88,2.50) and (3.15,2.53) .. (3.45,2.15);
\draw[leader] (1.63,2.01)
  .. controls (1.60,1.65) and (1.93,1.45) .. (2.50,1.23);

\node[anchor=west] at (9.38,2.38) {Inner curve};
\draw[leader] (9.48,2.16)
  .. controls (9.05,1.88) and (8.65,1.58) .. (8.15875,1.33750);

\draw[vector] (1.362500,-0.222457) -- (0.484564,-0.798415)
  node[left=3pt] {$n$};
\draw[vector] (1.362500,-0.222457) -- (1.938459,-1.100393)
  node[below=3pt] {$T_e$};

\draw[vector] (8.642931,-1.342706) -- (9.437776,-2.028798)
  node[right=3pt] {$n$};
\draw[vector] (8.642931,-1.342706) -- (9.329023,-0.547861)
  node[above right=1pt] {$T_e$};

\draw[vector] (6.807231,-1.311862) -- (7.242948,-0.356535)
  node[above=3pt] {$n$};
\draw[vector] (6.807231,-1.311862) -- (5.851904,-0.876146)
  node[left=3pt,vector label] {$T_e$};


\end{tikzpicture}
\caption{The shaded region is the slice $K_{e,t}$ within the plane $P_{e,t}$. The vector $e$ points out of the page. In this case, $\chi(K_{e,t})=1$.}
\label{Ket_fig}
\end{figure}

The normal vector $\nu$ is then a linear combination of $e$ and $n$, and the component of $\nu$ in the direction of $n$ is given by
\beql{nu_n}
\langle\nu,n\rangle = |\grad h_e|.
\eeq
If in addition we are at a point on $\partial K$ at which the normal vector $\nu$ is differentiable
(and almost all points on $\partial K$ 
have this property) then we can define the geodesic curvature  $\ka_e$ of $\partial K_{e,t}$ by
$$\grad_{T_e}T_e=-\kappa_e n.$$
Since $\langle \nu, T_e\rangle\equiv 0$, we can then compute
$$0=T_e\langle \nu, T_e\rangle=\langle \grad_{T_e}\nu, T_e\rangle
+\langle \nu, \grad_{T_e}T_e\rangle
=A(T_e,T_e)-\ka_e\langle\nu,n\rangle,$$
and so by \eqref{nu_n} we have
\beql{A_kappa}
A(T_e,T_e)=\ka_e |\grad h_e|.
\eeq
Here, $A(\cdot,\cdot)$ is the second fundamental form of $\partial K$ defined by
$A(v,w)=\langle \grad_v\nu,w\rangle$.
Fix now $e\in S^2$ within the set of full measure for which 
the set of critical points of $h_e$ is finite.
For each regular value $t\in\R$ of $h_e$, 
the Gauss-Bonnet theorem (for example) applied to $K_{e,t}$ gives
$$\int_{\partial K_{e,t}} \ka_e ds = 2\pi \chi(K_{e,t}).$$
The coarea formula, applied with the Lipschitz height function $h_e$, then gives
\beql{pre_S2_int}
2\pi\int_{\R} \chi(K_{e,t}) dt = 
\int_{\R} \int_{\partial K_{e,t}} \ka_e ds dt
= \int_{\partial K} \ka_e |\grad h_e| dvol
=\int_{\partial K}A(T_e,T_e) dvol
\eeq
by \eqref{A_kappa}.
For this application of coarea to be valid, and not be affected by the finitely many critical points, we require the absolute integrability given by
$$\int_{\R} \int_{\partial K_{e,t}} |\ka_e| ds dt
=\int_{\partial K}|A(T_e,T_e)| dvol\leq
\int_{\partial K}|A| dvol<\infty.$$
Finally we would like to integrate out the variable $e\in S^2$.
As $e$ varies over $S^2$, the vector $T_e=\frac{e\times \nu}{|e\times \nu|}$ varies  over the circle $\cc:=\nu^\perp \intersect S^2$. 
If we parametrise this circle by $\th$ and parametrise $S^2\setminus\{\nu,-\nu\}$ by
$(\th,s)$ where $s=\langle e,\nu \rangle \in (-1,1)$, then $T_e$ is independent of $s$.
We can use Archimedes' principle again to write the area element $d\si(e)$ on $S^2$ as $d\th ds$.
At each point on $\partial K$ at which the second fundamental form exists, 
we have
$$\int_{S^2} A(T_e,T_e) d\si(e)=\int_{-1}^1\int_{\cc} A(T_e,T_e) d\th(e)ds
=2\int_{\cc} A(T_e,T_e) d\th(e)=2\int_{\cc} A(e,e) d\th(e),$$
where the final equality follows because $T_e$ is just a $\frac{\pi}2$ rotation of $e\in\cc$ within $\cc$.
If we pick an orthonormal basis $\{v_1,v_2\}$ of the tangent space $\nu^\perp$ 
rotated to represent principal curvature directions, with curvatures $k_1$ and $k_2$ respectively, then we can write $e=\cos\th v_1 + \sin \th v_2$ and 
\beqa
A(e,e) & =A(v_1,v_1) \cos^2\th  + 2A(v_1,v_2)\cos\th\sin\th  + A(v_2,v_2)\sin^2\th \\
&= k_1 \cos^2\th + k_2 \sin^2\th.
\eeqa
Thus
$$\int_{S^2} A(T_e,T_e) d\si(e)=
2\int_{\cc} A(e,e) d\th(e) = 2\int_0^{2\pi} (k_1 \cos^2\th + k_2 \sin^2\th)d\th=2\pi(k_1+k_2)=4\pi H.$$
Applying this to the $S^2$ integral of \eqref{pre_S2_int} gives
$$2\pi\int_{S^2}\int_\R \chi(K_{e,t}) dt\,d\si(e) =
\int_{\partial K}\int_{S^2}A(T_e,T_e)d\si(e) dvol
=4\pi \int_{\partial K} H dvol,$$
which is exactly the identity \eqref{crofton_identity} that we sought.
\end{proof}

To see why the Crofton formula of Lemma \ref{crofton_lem} might be useful in proving the main Theorem \ref{main_thm}, consider the case that $M$ is a smooth closed surface that bounds a convex region $F\subset \R^3$. In that case, for every affine plane $P\subset \R^3$ intersecting the interior of $F$, the slice $F\intersect P$ 
is a closed disc with smooth boundary, so 
$\chi(F\intersect P)=1$.
Consequently for each $e\in S^2$ we have
$$\int_{\R} \chi(F\intersect P_{e,t})dt = \w_M(e),$$
and the Crofton formula given in \eqref{crofton_identity} implies
\beql{croft_conv}
\int_{M} H dvol = \half\int_{S^2}\w_M(e) d\si(e).
\eeq
Keeping in mind the fact that $H\geq 0$ for convex surfaces,
we have improved the first inequality of \eqref{gen_conc},
to an equality, in this special case, recovering 
the identity of Minkowski mentioned in Remark \ref{mink_rmk}.
Thus we have proved the full diameter conjecture in the case of surfaces that bound a convex set. The deduction of the sharp diameter estimate from \eqref{croft_conv} for convex surfaces is already described in the work of Miura \cite{miura_selecta}.

To prove the full result of Theorem \ref{main_thm} without the assumption of convexity we must work harder to overcome the problem that the slices of the interior of $M$ might not typically just consist of a single disc, so the Euler characteristic integrand of the Crofton formula \eqref{crofton_identity} might not be $1$.
The most tempting thing to try is to replace the interior of the boundary $M$ by its convex hull, which does not change the widths or diameter. For this to be useful in proving the original conjecture, we would need to bound the total absolute mean curvature of the boundary of the convex hull by the total absolute mean curvature of $M$. 
For the sharper estimate of Theorem \ref{main_thm} we would need to bound the 
integral of the positive part of the mean curvature of the boundary of the convex hull by the integral of the positive part of the  mean curvature of $M$.
These relationships  turn out to be true, but the only way we know how to prove this is by an application of the main Theorem \ref{main_thm} that we are currently trying to prove! Indeed, if $F$ is the closure of the interior of $M$, and $F^c$ is its convex hull, then \eqref{croft_conv} and Theorem \ref{main_thm} will tell us that
\beqa
\int_{\partial F^c} (H_{\partial F^c})_+ dvol 
&= \int_{\partial F^c} H_{\partial F^c} dvol = 
\half\int_{S^2}\w_{\partial F^c}(e) d\si(e)=
\half\int_{S^2}\w_M(e) d\si(e) \\
&\leq
\int_{M} (H_M)_+ dvol,
\eeqa
and additionally, as a consequence, that 
\beq
\int_{\partial F^c} |H_{\partial F^c}| dvol = 
\int_{\partial F^c} (H_{\partial F^c})_+ dvol 
\leq
\int_{M} (H_M)_+ dvol 
\leq \int_{M} |H_M| dvol.
\eeq
To avoid this circularity in the proof, we take a different type of hull, considered by Spadaro, in place of the convex hull.
In a preprint \cite{spadaro1}, and a later publication \cite{spadaro2},
Spadaro considered the \emph{mean-convex hull}. To define this, 
we first define the class $\s$ of images $\Si$ of smooth minimal embeddings in $\R^3$ of noncompact boundaryless surfaces, with $\Si$ bounded and of finite area.
We denote by $\ca$
the set of \emph{global barriers} $\Th\subset \R^3$, i.e. the set of subsets 
$\Th\subset \R^3$ with the property that 
$$\Si\in\s\quad\text{ with edge }\quad\overline{\Si}\setminus \Si \subset \Th \quad
\implies\quad \Si\subset\Th.$$ 
This is a precise way of saying 
$$\Si\text{ a minimal surface with }\partial \Si\subset\Th\quad
\implies\quad \Si\subset\Th.$$ 
With a slightly larger class $\s$, but one that ultimately gives the same mean-convex hull (see Remark \ref{min_surf_class_rmk}) Spadaro defined:
\begin{defn}[\cite{spadaro1}]
Given a 
subset $\Om\subset \R^3$, the \emph{mean-convex hull} of $\Om$ is defined to be 
$$\Om^{mc}:=\bigcap_{\Om\subset\Th\in \ca}\Th.$$
\end{defn}
By the convex hull property of minimal surfaces, the convex hull of $\Om$ will always serve as a global barrier. In general, the convex hull can be strictly larger than the mean-convex hull. 
See Spadaro \cite{spadaro1, spadaro2} for a discussion of the relationships between mean-convex hulls, mean-convex sets, and outward minimising sets.
%
%
\begin{thm}[Special case of Spadaro \cite{spadaro1, spadaro2}]
\label{spadaro_thm}
Suppose  $F\subset \R^3$ is the closure of an open bounded set with $C^{1,1}$ boundary.
Then the mean-convex hull $F^{mc}$ is a closed, bounded, mean-convex set with 
$C^{1,1}$ boundary. Moreover, any new boundary set $\partial (F^{mc})\setminus \partial F$ is a smooth minimal surface.
\end{thm}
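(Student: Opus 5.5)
This statement is essentially imported from Spadaro \cite{spadaro1, spadaro2}, so the plan is to recall how his argument specialises to our setting rather than to reprove everything. The main tool is a variational description of the hull. We intend to show that $F^{mc}$ coincides with the \emph{outward-minimising hull} of $F$: a set $F'\supset F$ minimising perimeter among all finite-perimeter sets containing $F$, and chosen maximal (or minimal) among such minimisers to make it unique.

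\textbf{Existence and boundedness.} We first obtain the minimiser $F'$ by the direct method in $BV$. Boundedness comes from intersecting any competitor with a large ball, which can only lower perimeter by the convex-hull-type projection argument.

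\textbf{Comparison with global barriers.} Next we check both inclusions between $F'$ and $F^{mc}$.
\begin{itemize}
\item[(a)] $F'$ is a global barrier. Suppose a minimal $\Si\in\s$ has edge in $F'$ but leaves $F'$. Then the first-variation/calibration argument, or the maximum principle applied at a point of $\Si$ furthest outside $F'$ relative to a foliation, contradicts either the minimality of $F'$ or its maximality. This gives $F^{mc}\subset F'$.
\item[(b)] Every global barrier containing $F$ contains $F'$. We argue that $\partial F'\setminus F$ is a stationary, in fact area-minimising from outside, surface with edge on $\partial F$. By the regularity discussed below it lies in $\s$ (up to the class adjustment in Remark \ref{min_surf_class_rmk}), so it is trapped inside any barrier $\Th\supset F$.
\end{itemize}
Together these give $F^{mc}=F'$.

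\textbf{Regularity.} Away from $F$, $\partial F'$ is a one-sided perimeter minimiser, hence locally area-minimising in $\R^3\setminus F$. By De Giorgi--Federer regularity in ambient dimension three there is no singular set, so $\partial F'\setminus F$ is a smooth minimal surface. Near $\partial F$ we have an obstacle problem with a $C^{1,1}$ obstacle. We would invoke the known optimal regularity for perimeter minimisers with obstacles (Miranda, Tamanini, Sternberg--Williams--Ziemer) to conclude $\partial F'$ is $C^{1,1}$ across the contact set, which is where the $C^{1,1}$ hypothesis on $\partial F$ is used. Mean-convexity follows because $F'$ is outward minimising: outward deformations cannot decrease area, so $H\ge 0$ weakly, which in turn gives $H\ge0$ a.e. for a $C^{1,1}$ boundary.

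\textbf{Main obstacle.} The hard step is the $C^{1,1}$ regularity up to the free boundary where the minimal part meets $\partial F$, together with ensuring the free minimal pieces genuinely belong to $\s$: bounded, finite area, and embedded. Our plan is to cite the obstacle-problem regularity theory directly instead of redoing it.
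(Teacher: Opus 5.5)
\textbf{The gap: $F^{mc}$ is not the outward-minimising hull.} Your plan rests on identifying $F^{mc}$ with the outward-minimising hull $F'$. That identification is false, so step (a) cannot be carried out by any first-variation, calibration or maximality argument.

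Take $F=T$, the solid torus $\{(\rho-R)^2+z^2\le r^2\}$ in cylindrical coordinates, with $R>2\pi r$.
\begin{itemize}
\item For $|t|<r$, the circle $\{\rho=R,\ z=t\}$ lies in the interior of $T$.
\item The open flat disc $\{\rho<R,\ z=t\}$ belongs to $\s$, and its edge is that circle.
\item Hence every global barrier containing $T$ contains the cylinder $Z=\{\rho<R,\ |z|<r\}$.
\item A bounded set containing $Z$ has perimeter at least $2\pi R^2$, because almost every vertical line through $\{\rho<R\}$ must leave the set both above and below $Z$.
\item But $P(F')\le P(T)=4\pi^2Rr<2\pi R^2$.
\end{itemize}
So $F'$ is not a global barrier, and $F'\subsetneq F^{mc}$. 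Here $F^{mc}$ is in fact the convex hull $\{\rho\le R+\sqrt{r^2-z^2},\ |z|\le r\}$.

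The underlying reason is that the barrier condition quantifies over \emph{all} minimal surfaces spanning curves in the set. This includes surfaces of large area that never appear as competitors in a perimeter problem. Your step (b) is fine; it is essentially the argument of Remark~\ref{min_surf_class_rmk}. But it only gives $F'\subset F^{mc}$.

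This is not a technicality for the paper either. Property~\ref{property_4} of Corollary~\ref{mean_convex_cor} would force $Z\subset F'$, so the same example shows that $F'$ violates it. That property is exactly what rules out inner curves in the slices in the proof of Theorem~\ref{main_thm}. So the difficulty is not mainly where you locate it (regularity at the free boundary); it is in getting the right set in the first place.

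\textbf{What is needed instead.} You need a construction that approaches $F$ from outside and respects barriers. The paper cites Spadaro's shrink-wrapping:
\begin{itemize}
\item start from a large sphere;
\item repeatedly reduce area as fast as possible subject to enclosing $F$, which is a discrete mean curvature flow with obstacle $F$;
\item let time go to infinity, then let the step size go to zero.
\end{itemize}
A monotone flow from outside cannot cross a minimal surface whose edge it encloses (a comparison/avoidance argument). Every evolving set contains the limit, so the limit inherits the global-barrier property. In the torus example the flow stops at the convex hull, with two flat stationary discs, not at $T$.

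The reverse inclusion then follows exactly as in your (b). Your regularity ingredients are the right kind of tools: $C^{1,1}$ regularity for perimeter problems with $C^{1,1}$ obstacles, and the absence of singularities for area-minimising boundaries in $\R^3$. However, they must be established for this obstacle-flow hull, not for a global perimeter minimiser.
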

The idea of Spadaro to construct the mean-convex hull was to \emph{shrink-wrap}
the original set $F$. Slightly more precisely, starting with an enclosing surface, such as a large sphere, Spadaro iteratively reduces its area as quickly as possible while keeping $F$ enclosed. This gives a discrete approximation to mean curvature flow with an obstacle. He then takes the large-time limit, followed by the limit as the time step goes to zero.

\begin{rmk}
\label{min_surf_class_rmk}
As mentioned earlier, Spadaro took a larger class $\s$ of minimal surfaces than the one we considered above. In theory, it might then have resulted in a bigger mean-convex hull $\widehat{F^{mc}}$ than the one we are considering. However, the new part of the boundary $\partial (\widehat{F^{mc}})\setminus \partial F$ is a finite area minimal surface,
and lies in our class $\s$, with its edge within $F$.
Therefore the entire boundary $\partial (\widehat{F^{mc}})$ 
lies within any global barrier $\Th$, with respect to our $\s$, 
that contains $F$.
But then, given any point $p$ in the interior of $\widehat{F^{mc}}$, we can pick any plane $P$ through $p$ and consider the minimal surface $\Si$ consisting of the connected component of $P\intersect \Interior(\widehat{F^{mc}})$
containing $p$, which also lies within our class $\s$ and has its
edge within $\partial (\widehat{F^{mc}})\subset \Th$, and conclude from the global barrier property that $p\in\Si\subset \Th$.
Since $p\in P$ was an arbitrary point in the interior of $\widehat{F^{mc}}$,
and $\partial (\widehat{F^{mc}})\subset \Th$,
we have $\widehat{F^{mc}}\subset \Th$.
We deduce that the mean-convex hull $\widehat{F^{mc}}$ of Spadaro must lie within the mean-convex hull $F^{mc}$ we consider here, so they must coincide.
Spadaro himself uses an argument of this form in his work \cite[\S 5.4, page 373]{spadaro2}.
\end{rmk}


All we will need of Theorem \ref{spadaro_thm} is the following
immediate corollary that happens to be 
a little easier to prove directly. The set $K$ below can be taken to be the mean-convex hull of $F$.
\begin{cor}
\label{mean_convex_cor}
Suppose  $F\subset \R^3$ is the closure of an open bounded set with $C^{1,1}$ boundary.
Then there exists a set $K\subset\R^3$ that is also 
the closure of an open bounded set with $C^{1,1}$ boundary,
with the following properties.
\begin{compactenum}
\item
$F\subset K \subset F^c$, where $F^c$ is the convex hull of $F$;
\item
$\partial K \setminus \partial F$ is a smooth minimal surface;
\item
\label{property_4}
If $P$ is an affine plane in $\R^3$ and $\Ga$ is a smooth closed embedded curve in $P$ lying in the interior of $K$, then the closed disc in $P$ bounded by $\Ga$ lies wholly within $K$.
\end{compactenum}
\end{cor}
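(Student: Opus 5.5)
We take $K:=F^{mc}$, the mean-convex hull of $F$, and read off the three properties from Theorem \ref{spadaro_thm} together with the definition of the hull as an intersection of global barriers. Theorem \ref{spadaro_thm} already says that $K$ is closed and bounded with $C^{1,1}$ boundary. It is the closure of its interior because it contains $F$, which is the closure of an open set, and because the new boundary $\partial K\setminus\partial F$ is a smooth minimal surface bounding the region from the outside. The same theorem gives property (ii) directly.

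For property (i), the inclusion $F\subset K$ is immediate, since every $\Th$ in the intersection defining $F^{mc}$ contains $F$. For $K\subset F^c$, we check that $F^c$ is itself a global barrier. Take $\Si\in\s$ with edge $\overline{\Si}\setminus\Si\subset F^c$. Each coordinate of a minimal surface is harmonic, so for any unit $e$ the height function $\langle x,e\rangle$ is harmonic on $\Si$. Because $\Si$ is bounded with compact closure, this height function attains its maximum over $\overline\Si$. By the maximum principle it cannot attain that maximum at an interior point unless it is constant on a component, and in either case the maximum is attained on the edge. Hence $\Si$ lies in every closed half-space containing the edge, and therefore in the convex hull $F^c$. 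So $F^c\in\ca$, and $K\subset F^c$.

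For property (iii), let $\Ga\subset P\cap\Interior(K)$ be a smooth closed embedded curve, and let $D\subset P$ be the closed disc it bounds. Suppose $D\not\subset K$. Set $U:=\Interior(D)\setminus K$, which is relatively open in $P$ and nonempty. Since $\Ga\subset\Interior(K)$ and $K$ is closed, $\overline U$ stays a positive distance from $\Ga$, and $\partial U\subset\partial K$. Pick a connected component $\Si$ of $U$. It is a flat, hence minimal, surface, noncompact and boundaryless, bounded, of finite area, with edge $\overline\Si\setminus\Si\subset\partial K\subset K$. So $\Si\in\s$.

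The step I expect to be the main obstacle is showing that $K$ is itself a global barrier, so that we may conclude $\Si\subset K$, contradicting $\Si\cap K=\emptyset$. The difficulty is that an intersection of global barriers need not obviously be a barrier unless the edge lies in each $\Th$. Here this is fine: the edge lies in $K\subset\Th$ for every $\Th\in\ca$ with $F\subset\Th$, so $\Si\subset\Th$ for each such $\Th$, and hence $\Si\subset\bigcap\Th=K$. The other subtle point is verifying that $\Si$ really belongs to $\s$, namely that a planar open set is an embedded noncompact surface with edge inside $\partial K$. This holds because $\Si$ is open in $P$ and bounded, so it is not closed in $P$.

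This argument uses only the definition of the hull. If one wished to avoid Spadaro's regularity theorem, one would instead minimise area among sets enclosing $F$ inside $F^c$ and use obstacle-problem regularity for the $C^{1,1}$ boundary. I would simply cite Theorem \ref{spadaro_thm}.
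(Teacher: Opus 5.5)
Your proposal is correct and follows the paper's route. The paper simply takes $K=F^{mc}$ and treats the corollary as immediate from Theorem \ref{spadaro_thm} and the definition of the hull. Your verification of (i) via the convex hull property, and of (iii) by applying the barrier property to flat planar pieces (which lie in $\s$), is the natural way to fill this in and mirrors the argument of Remark \ref{min_surf_class_rmk}.

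Two small remarks. The reason you give for $K$ being the closure of its interior (that it contains $F$) is not an argument by itself, but this is part of what ``$C^{1,1}$ boundary'' means in Theorem \ref{spadaro_thm}. Also, an intersection of global barriers is trivially a global barrier, so the ``main obstacle'' you flag is not really one.
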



A major advantage of the mean-convex hull over the convex hull is that we can relate the total mean curvature of a closed bounded subset with $C^{1,1}$ boundary to the total mean curvature of its mean-convex hull. More precisely we have the following.
\begin{lem}
\label{H_lem}
Suppose $F\subset K$ are the closures of open bounded subsets of $\R^3$ with $C^{1,1}$ boundaries, with $\partial K \setminus \partial F$ a smooth minimal surface. 
Define the mean curvatures $H_{\partial F}$ and $H_{\partial K}$ with respect to outward normal vectors. Then 
\beql{mean_curv_comp}
\int_{\partial K} H_{\partial K} dvol = 
\int_{\partial K \intersect \partial F} H_{\partial F}dvol 
\leq \int_{\partial F}(H_{\partial F})_+ dvol.
\eeq
\end{lem}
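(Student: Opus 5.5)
We split $\partial K$ into the two disjoint pieces $\partial K\setminus\partial F$ and $\partial K\intersect\partial F$. The first piece is, by hypothesis, a smooth minimal surface, so $H_{\partial K}\equiv 0$ there and it contributes nothing to $\int_{\partial K}H_{\partial K}dvol$. Thus
$$\int_{\partial K} H_{\partial K}\,dvol=\int_{\partial K\intersect\partial F} H_{\partial K}\,dvol,$$
and the main task is to identify $H_{\partial K}$ with $H_{\partial F}$ almost everywhere on $\partial K\intersect\partial F$.

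\textbf{Identification of the mean curvatures.} Since $F\subset K$, at any point $x\in\partial K\intersect\partial F$ the two $C^{1,1}$ boundaries touch, with $F$ lying inside $K$. Two $C^1$ hypersurfaces that are tangent from one side share a tangent plane, and the outward normals agree because both interiors lie on the same side. Locally near $x$ we may write both boundaries as graphs $u_F\le u_K$ (in the appropriate orientation) of $C^{1,1}$ functions over the common tangent plane. On the coincidence set $\{u_F=u_K\}$ we have $u_F=u_K$ and $\grad u_F=\grad u_K$.

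The key fact is then the standard one for Sobolev functions: for $w=u_K-u_F\in W^{2,\infty}$, the second derivatives $D^2 w$ vanish almost everywhere on $\{w=0\}$. This follows by applying twice the result that $\grad v=0$ a.e.\ on $\{v=0\}$ for $v\in W^{1,1}_{loc}$, first to $w$ and then to each $\partial_i w$, noting that $\{w=0\}\subset\{\partial_iw=0\}$. Hence the second fundamental forms, and in particular the mean curvatures computed with respect to the shared outward normal, coincide $\h^2$-a.e.\ on $\partial K\intersect\partial F$. This gives the equality in \eqref{mean_curv_comp}.

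\textbf{The inequality.} This is immediate: $H_{\partial F}\le (H_{\partial F})_+$, and $(H_{\partial F})_+\ge 0$ allows us to enlarge the domain of integration from $\partial K\intersect\partial F$ to $\partial F$.

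The main point requiring care is the a.e.\ identification of the curvatures, since on a coincidence set of positive measure with possibly empty interior we cannot compare pointwise second derivatives naively. The Sobolev argument above handles this, and it is also where the $C^{1,1}$ hypothesis is exactly what is needed: it makes $H$ an $L^\infty$ function, consistent with Remark \ref{C11regthm}.
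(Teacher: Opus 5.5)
Your proposal is correct and follows essentially the same route as the paper. Both arguments discard the minimal piece $\partial K\setminus\partial F$, write the two boundaries near a contact point as ordered $C^{1,1}$ graphs over the common tangent plane, and use the fact that the Lipschitz first derivatives of $g=k-f\geq 0$ have vanishing derivative almost everywhere on the contact set; this gives $H_{\partial K}=H_{\partial F}$ a.e. there, after which the inequality is immediate. The only difference is phrasing: you invoke the Sobolev fact that $\nabla v=0$ a.e. on $\{v=0\}$, while the paper argues directly via points of differentiability of density one.
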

\begin{proof}
The inequality of \eqref{mean_curv_comp} is straightforward because
$$\int_{\partial K \intersect \partial F} H_{\partial F}dvol
\leq \int_{\partial K \intersect \partial F} (H_{\partial F})_+ dvol
\leq\int_{\partial F} (H_{\partial F})_+ dvol.$$
We are therefore reduced to proving the equality of \eqref{mean_curv_comp}.
Because $H_{\partial K}=0$ on $\partial K \setminus \partial F$ by hypothesis, 
this reduces to proving 
\beql{intersection_reduction}
\int_{\partial K \intersect \partial F} H_{\partial K} dvol = 
\int_{\partial K \intersect \partial F} H_{\partial F}dvol.
\eeq
Given an arbitrary point $p\in \partial K \intersect \partial F$, i.e. an arbitrary contact point, the tangent planes to $\partial K$ and $\partial F$
must coincide because of the inclusion $F\subset K$ and the regularity of the boundaries.
In a neighbourhood of $p$ we can write  $\partial K$ and $\partial F$ as $C^{1,1}$ graphs over this plane, of functions $k$ and $f$ respectively.
We may assume that near $p$ the sets $K$ and $F$ lie below these graphs rather than above. The inclusion $F\subset K$ then implies that $f\leq k$ locally,
so we can define a $C^{1,1}$ function $g:=k-f$ near to $p$ with the property 
that $g\geq 0$. 
Locally, the contact set $\partial K \intersect \partial F$ is described by the zero set of $g$. 
On that zero set the derivative of $g$ vanishes  everywhere, since $g\geq 0$ locally. 

Meanwhile, each component of the derivative of $g$ is Lipschitz, and every Lipschitz function has the property that its derivative exists, and vanishes, 
almost everywhere on its zero set. Indeed, it will vanish at every point for which both the Lipschitz function is differentiable and the zero set 
is of density one. Consequently, we find that locally the Hessian of $g$ must both exist and vanish at almost every point of the contact set $\{g=0\}$. Thus, locally, $f$ agrees with $k$, up to second order derivatives at almost every point in $\partial K \intersect \partial F$.
Because  we can write the mean curvatures of $\partial K$ and $\partial F$ in terms of their local defining functions
$k$ and $f$ and their first and second derivatives, we find that locally to $p$, we have 
$H_{\partial K}=H_{\partial F}$ at almost all points of 
$\partial K \intersect \partial F$.
But $p$ was an arbitrary point in the contact set, so 
\eqref{intersection_reduction} follows.
\end{proof}

Property \ref{property_4} of Corollary \ref{mean_convex_cor} will allow us to ensure that the Euler characteristic of slices of $K$ must be $1$ or more.
\begin{lem}
\label{no_inner_curves}
Suppose $K\subset \R^3$ is the closure of an open bounded set with $C^{1,1}$ boundary, 
satisfying Property \ref{property_4} of Corollary \ref{mean_convex_cor}.
Then for every $e\in S^2$, and every regular value $t\in \R$ of 
$h_e:\partial K\to\R$  defined by
$h_e(x)=\langle x,e\rangle$,
every boundary curve of $\partial K \intersect P_{e,t}$ is an outer curve, and hence 
$$\chi(K_{e,t})\geq 1$$
if $K_{e,t}$ is non-empty.
\end{lem}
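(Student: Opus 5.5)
Fix $e\in S^2$ and a regular value $t$ of $h_e$. By Remark \ref{IFT_rmk}, $\partial K_{e,t}=\partial K\intersect P_{e,t}$ is a finite union of pairwise disjoint $C^{1,1}$ embedded closed curves in $P_{e,t}$. The plan is to argue by contradiction: suppose one of these curves, $\gamma$, is an inner curve. We will use Property \ref{property_4} to show that the closed disc $D\subset P_{e,t}$ bounded by $\gamma$ already lies in $K$. Then adding $D$ to $K_{e,t}$ changes nothing, so $\gamma$ cannot be a boundary component that disappears, which is the contradiction.

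To apply Property \ref{property_4} we need a smooth closed embedded curve in $P_{e,t}$ lying in the \emph{interior} of $K$. The curve $\gamma$ itself lies on $\partial K$, so we push it slightly into $K$ within the plane. Since $t$ is a regular value, near $\gamma$ the set $K_{e,t}$ lies on exactly one side of $\gamma$ in $P_{e,t}$, namely the side opposite to $n$. For an inner curve this side is outside $D$; this is the meaning of ``inner'', since filling in $D$ removes $\gamma$ from the boundary.

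Next we take a thin collar of $\gamma$ in $P_{e,t}$ lying in $K_{e,t}$ on that side. Its points near $\gamma$ are interior points of $K$ in $\R^3$, not just relative interior points of the slice. This holds because transversality (regularity of $t$) makes $\partial K$ locally a graph crossing the plane, and the collar avoids $\partial K$ for a small enough width. Inside this collar we choose a smooth closed embedded curve $\Ga$ isotopic to $\gamma$, for instance a smoothed parallel curve at small distance. Then $\Ga$ bounds a disc $D'\supset D$ in $P_{e,t}$, and Property \ref{property_4} gives $D'\subset K$, hence $D\subset K$. So $K_{e,t}\cup D=K_{e,t}$. On the other hand $D$ contains points near $\gamma$ on the $n$-side, which lie outside $K$. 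This is the contradiction, so every curve is outer.

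For the Euler characteristic, by Remark \ref{IFT_rmk} $\chi(K_{e,t})$ equals the number of outer curves minus the number of inner curves. With no inner curves, this is the number of boundary curves. That number is at least $1$ whenever $K_{e,t}$ is nonempty: $K_{e,t}$ is a compact subset of the plane, and if it has nonempty interior its boundary is nonempty. At a regular value $K_{e,t}$ is a compact planar domain with $C^{1,1}$ boundary, so it is nonempty exactly when it has a boundary curve. The main technical point is the inner/outer characterisation via which side of $\gamma$ the set $K_{e,t}$ lies on, together with producing the interior curve $\Ga$. Both are routine consequences of transversality at regular values.
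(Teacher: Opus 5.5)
Your proof is correct and takes essentially the same approach as the paper. You push each boundary curve of $K_{e,t}$ slightly into the interior of $K$ within $P_{e,t}$ to get a smooth closed curve, then invoke Property \ref{property_4} to conclude that the disc bounded by the original curve already lies in $K$, so filling it in cannot remove that boundary component. The differences are cosmetic: you argue by contradiction and use a smoothed parallel curve where the paper uses level sets of the solution of $-\lap u=1$. Your check that a nonempty slice has at least one boundary curve fills in a point the paper leaves implicit.
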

As we have used before, the facts that the slice $K_{e,t}:=K \intersect P_{e,t}$ is at a regular value of $h_e$, and that $\partial K$ is $C^{1,1}$ and compact, imply that either the slice is empty, or it consists of a  closed bounded set whose boundary is a finite collection of pairwise disjoint $C^{1,1}$ curves.
\begin{proof}[Proof of Lemma \ref{no_inner_curves}]
It is straightforward to approximate our finitely many boundary components of 
$K_{e,t}$ by nearby smooth curves that lie within $K_{e,t}$. For example, one could solve the PDE $-\lap u=1$ on $K_{e,t}$ with zero boundary data on $\partial K_{e,t}$, and then consider level sets $\{u=\ep\}$ for small $\ep>0$.

Pick any one of these boundary components and its corresponding smooth curve.
By Property \ref{property_4} of Corollary \ref{mean_convex_cor}, the interior of this curve in $P_{e,t}$ will already lie in $K_{e,t}$, so adding in this disc would not change the boundary components of $K_{e,t}$. 
Therefore, by the definition of `inner' curve from Remark \ref{IFT_rmk}, this arbitrary boundary curve, and therefore every boundary curve, must be an `outer' curve. 
The Euler characteristic $\chi(K_{e,t})$ is the number of outer curves.
\end{proof}

\begin{proof}[Proof of the main Theorem \ref{main_thm} and the extension of Remark \ref{C11regthm}]
For the closed $C^{1,1}$ surface $M$ embedded in $\R^3$, we denote by 
$F$ the closure of the open bounded subset of $\R^3$ having $M$ as its boundary.
By Corollary \ref{mean_convex_cor}, there exists a closed bounded set $K\subset\R^3$
satisfying the four conclusions of that corollary.
By Lemma \ref{H_lem}, we have
\beql{mean_curv_comp2}
\int_{\partial K} H_{\partial K} dvol 
\leq \int_M(H_{M})_+ dvol.
\eeq
For $e\in S^2$, consider the function $x\mapsto\langle x,e\rangle$ that we have  called $h_e$ when restricted to $\partial K$. Its image over $M$, which we  abbreviate $\langle M,e\rangle$, is a closed interval $\ci_e\subset \R$ of length $\w_M(e)$ by the connectedness of $M$. By the definition of $F$, the image $\langle F,e\rangle$ also equals $\ci_e$.
By the inclusions $F\subset K \subset F^c$ of Corollary \ref{mean_convex_cor}, 
this gives
\beql{Ke_image}
\langle K,e\rangle = \ci_e. 
\eeq
Next, we invoke the Crofton formula of Lemma \ref{crofton_lem} to give
\beql{crofton_identity2}
\int_{\partial K} H_{\partial K} dvol = \half\int_{S^2}\int_{\R} \chi(K_{e,t})dt d\si(e).
\eeq
By Lemma \ref{no_inner_curves} and \eqref{Ke_image}, 
for every $e\in S^2$ and every regular value $t\in\ci_e$,
we have $\chi(K_{e,t})\geq 1$
and hence
\beql{width_comp2}
\int_\R \chi(K_{e,t})dt=\int_{\ci_e} \chi(K_{e,t})dt\geq |\ci_e|=\w_M(e).
\eeq
In fact, it will suffice to prove this for almost all $e\in S^2$, so we can appeal to Remark \ref{IFT_rmk} and assume that there are finitely many critical values in $\ci_e$,
relieving the need for Sard's theorem for $C^{1,1}$ functions to justify \eqref{width_comp2}.
Combining \eqref{mean_curv_comp2}, \eqref{crofton_identity2}
and \eqref{width_comp2} gives 
\beql{crofton_identity3}
\int_M(H_{M})_+ dvol\geq 
\int_{\partial K} H_{\partial K} dvol 
= \half\int_{S^2}\int_{\R} \chi(K_{e,t})dt d\si(e)
\geq \half\int_{S^2} \w_{M}(e) d\si(e),
\eeq
which is the first inequality of \eqref{gen_conc}.
Together with Remark \ref{proof_reduction}, this concludes the proof.
\end{proof}

Finally, we give a proof of our estimate for surfaces with boundary, by 
combining the method of Miura \cite{miura_sns} with our new diameter estimates.

\begin{proof}[Proof of Theorem \ref{bdry_thm}]
By compactness of $M$ and its boundary, there exists $\ep_0>0$ so that 
both $M$ and $\partial M$ admit $\ep_0$-tubular neighbourhoods.
For $\ep\in (0,\ep_0)$, we consider all points in $\R^3$ that are exactly a distance $\ep$ from $M$ via a minimising geodesic that meets 
$M$ orthogonally; we call this surface $M_\ep$. 
Locally, for $\ep$ very small the surface $M_\ep$ looks very much like two copies of $M$, displaced in opposite normal directions by a distance $\ep$.
Globally, $M_\ep$ will be  a two-sheeted surface if $M$ is two-sided. Otherwise (e.g. for a M\"obius band) $M_\ep$ will be a single connected surface. Either way, after defining a unit normal on $M_\ep$ that is pointing away from $M$, in the limit $\ep\downto 0$, intuitively, $M_\ep$ collapses to two copies of $M$ with different normals, so with mean curvature differing by a sign. 
More precisely (cf. Miura \cite{miura_sns}) we have
$$\lim_{\ep\downto 0}
\int_{M_\ep}(H_{M_\ep})_+ dvol =\int_M |H| dvol.$$
Meanwhile, $M_\ep$ is part of a larger closed embedded surface $\widetilde M_\ep$, of regularity $C^{1,1}$, consisting of all points in $\R^3$ exactly a distance $\ep$ from $M$, without the restriction that the minimising geodesic to $M$ hits $M$ orthogonally. This serves to connect the sheets of $M_\ep$ near their boundary by adding half-tubes around $\partial M$ of cross-sectional radius $\ep$, resulting in a closed embedded surface. 
One can consult Miura \cite[Figure 2.1]{miura_sns} for an example illustration.
For small $\ep$, these half-tubes have area of order $\pi \ep \Length (\partial M)$, and positive mean curvature approximately $1/(2\ep)$. 
More precisely (cf. Miura \cite{miura_sns}) we have
$$\lim_{\ep\downto 0} \int_{\widetilde M_\ep\setminus M_\ep}(H_{\widetilde M_\ep})_+ dvol = \frac{\pi}{2} \Length (\partial M),$$
and thus
\beq
\label{ep_to_0_lim1}
\lim_{\ep\downto 0} \int_{\widetilde M_\ep} (H_{\widetilde M_\ep})_+ dvol
= \int_M |H| dvol+
\frac{\pi}{2} \Length (\partial M).
\eeq
We also observe that $\widetilde M_\ep$ is $\ep$-close to $M$ in the Hausdorff topology, so 
\beq
\label{ep_to_0_lim2}
\w_{\widetilde M_\ep}(e) \to \w_M(e) \text{ uniformly in }e\text{ as }\ep\downto 0.
\eeq
Given that $\widetilde M_\ep$ is a closed embedded surface of $C^{1,1}$ regularity, we can apply the main Theorem \ref{main_thm} to find that 
$$\int_{\widetilde M_\ep} (H_{\widetilde M_\ep})_+ dvol\geq 
\half \int_{S^2}\w_{\widetilde M_\ep}(e) d\si(e).$$
Taking the limit $\ep\downto 0$, using \eqref{ep_to_0_lim1}
and \eqref{ep_to_0_lim2} gives 
$$\int_M |H| dvol+\frac{\pi}{2} \Length (\partial M)
\geq 
\half \int_{S^2}\w_{M}(e) d\si(e),$$
which is the first inequality of \eqref{bdry_est}.
The second inequality follows from Remark \ref{proof_reduction} as before.
\end{proof}


\emph{Acknowledgements:} 
Thanks to Jos\'e Escobar, Tatsuya Miura and Christian Scharrer for interesting conversations about this problem over the years.
For the purpose of open access, the author has applied a creative commons attribution (CC BY) licence to any author accepted manuscript version arising.

\emph{Tool use statement:} 
The problem considered is due to the author. The proof was developed in collaboration with, and as a test of, commercially available models of ChatGPT as of September/October 2026, in particular 5.6 Sol and 6 Astra. This included rapid assessments of different slicing methods, potential methods, hull methods, symmetrisation methods and the generation of new ideas. It also helped in assembling a representative sample of the applications of our earlier work and in navigating the classical integral geometry literature. This paper was assembled from scratch by the author based on these interactions. Only the bibliography has any text or formulae directly pasted from AI generated output. AI was used to convert Figure \ref{Ket_fig} from handwritten form to tikz form. AI was used to pick up minor errors in a draft version.
For the avoidance of doubt, the author takes full responsibility for the correctness of the paper.


\noindent

\noindent
\url{http://warwick.ac.uk/fac/sci/maths/people/staff/peter\_topping/}\\
\noindent
{\sc Mathematics Institute, University of Warwick, Coventry,
CV4 7AL, UK.}

\end{document}